\documentclass[english,12pt]{article}
\usepackage[T1]{fontenc}
\usepackage[latin9]{inputenc}
\usepackage{geometry}
\geometry{verbose,tmargin=2cm,bmargin=2cm,lmargin=2cm,rmargin=2cm}
\usepackage{bm}
\usepackage{amsmath}
\usepackage{amsthm}
\usepackage{amssymb}
\usepackage{setspace}
\usepackage[authoryear]{natbib}
\onehalfspacing

\makeatletter
\newcommand{\lyxaddress}[1]{
	\par {\raggedright #1
	\vspace{1.4em}
	\noindent\par}
}
\theoremstyle{plain}
\newtheorem{prop}{\protect\propositionname}
\theoremstyle{plain}
\newtheorem{lem}{\protect\lemmaname}

\makeatother

\usepackage{babel}
\providecommand{\lemmaname}{Lemma}
\providecommand{\propositionname}{Proposition}

\begin{document}
\title{Universal Inference with Composite Likelihoods}
\author{Hien Duy Nguyen\thanks{Email: h.nguyen5@latrobe.edu.au.}}
\maketitle

\lyxaddress{Department of Mathematics and Statistics, La Trobe University, Bundoora
3086, Australia}
\begin{abstract}
\citet[PNAS, vol. 117, pp. 16880\textendash 16890]{Wasserman:2020aa}
constructed estimator agnostic and finite-sample valid confidence
sets and hypothesis tests, using split-data likelihood ratio-based
statistics. We demonstrate that the same approach extends to the use
of split-data composite likelihood ratios as well, and thus establish
universal methods for conducting multivariate inference when the data
generating process is only known up to marginal and conditional relationships
between the coordinates. Always-valid sequential inference is also
considered.
\end{abstract}

\section{Introduction}

Let $\bm{X}\in\mathbb{X}\subseteq\mathbb{R}^{d}$ ($d\in\mathbb{N}$)
be a random variable arising from a parametric family of distributions
$\mathcal{P}_{\bm{\theta}}$ with probability density/mass functions
(we shall use PDFs/PMFs) of form $p\left(\bm{x};\bm{\theta}\right)$,
for $\bm{\theta}\in\Theta\subseteq\mathbb{R}^{q}$ ($q\in\mathbb{N}$).
Let $\mathbf{X}_{2n}=\left(\bm{X}_{1},\dots,\bm{X}_{2n}\right)$ be
a sample of $2n$ ($n\in\mathbb{N}$) independently and identically
distributed replicates of $\bm{X}$ and split the data into two subsamples
$\mathbf{X}_{n}^{0}=\left(\bm{X}_{1},\dots,\bm{X}_{n}\right)=\left(\bm{X}_{1}^{0},\dots,\bm{X}_{n}^{0}\right)$
and $\mathbf{X}_{n}^{1}=\left(\bm{X}_{n+1},\dots,\bm{X}_{2n}\right)=\left(\bm{X}_{1}^{1},\dots,\bm{X}_{n}^{1}\right)$.
Without causing confusion, we shall use PDF to mean PDF or PMF, throughout
the text.

Suppose that the data generating process (DGP) of $\bm{X}$ has distribution
$\mathcal{P}_{\bm{\theta}^{*}}$ for some $\bm{\theta}^{*}\in\Theta$
and that $\tilde{\bm{\theta}}_{n}^{k}$ is some generic estimator
of $\bm{\theta}^{*}$, using data $\mathbf{X}_{n}^{k}$ ($k\in\left\{ 0,1\right\} $).
Consider the split likelihood ratio statistics (LRSs)
\begin{equation}
U_{n}^{k}\left(\bm{\theta}\right)=\frac{L\left(\tilde{\bm{\theta}}_{n}^{1-k};\mathbf{X}_{n}^{k}\right)}{L\left(\bm{\theta};\mathbf{X}_{n}^{k}\right)}\text{,}\label{eq: T like}
\end{equation}
for each $k$, and the swapped LRS

\begin{equation}
\bar{U}_{n}\left(\bm{\theta}\right)=\frac{U_{n}^{0}\left(\bm{\theta}\right)+U_{n}^{1}\left(\bm{\theta}\right)}{2}\text{,}\label{eq: S like}
\end{equation}
and 
\[
L\left(\bm{\theta};\mathbf{X}_{n}^{k}\right)=\prod_{i=1}^{n}p\left(\bm{X}_{i}^{k};\bm{\theta}\right)
\]
is the likelihood of subsample $\mathbf{X}_{n}^{k}$, evaluated at
parameter value $\bm{\theta}$.

Let $\text{E}_{\bm{\theta}^{*}}$ and $\mathrm{Pr}_{\bm{\theta}^{*}}$
denote the expectation and probability operators with respect to the
distribution $\mathcal{P}_{\bm{\theta}^{*}}$, respectively. In \citet{Wasserman:2020aa},
the remarkable result that 

\begin{equation}
\text{E}_{\bm{\theta}^{*}}\left[U_{n}^{k}\left(\bm{\theta}^{*}\right)\right]\le1\label{eq: Wasserman Main}
\end{equation}
is established and used to derive finite-sample validity of a number
of simple universal confidence set estimators and hypothesis tests,
using (\ref{eq: T like}) and (\ref{eq: S like}) (and variants),
that are agnostic to the choice of parameter estimators $\tilde{\bm{\theta}}_{n}^{k}$
and DGPs $\mathcal{P}_{\bm{\theta}}$. The results are then extended
from likelihood-based inference to misspecified likelihood, power
likelihood, and smoothed likelihood-based inference, as per the works
of \citet{White1982}, \citet{Royall:2003aa}, and \citet{Seo:2013aa},
respectively. Furthermore, \citet{Wasserman:2020aa} prove results
regarding always-valid tests, $p$-values and confidence sets, in
the style of \citet{Johari:2017aa}.

In this note, we derive extensions to the results of \citet{Wasserman:2020aa}
for the context of composite likelihood-based (or equivalently, pseudo-likelihood-based)
inference, as considered in \citet{Lindsay1988}, \citet{Arnold1991},
\citet{Molenberghs2005}, \citet{Varin2011}, \citet{Yi:2014aa},
and \citet{Nguyen:2018ab}, among numerous other texts. This includes
results for batch inference as well as sequential inference.

We proceed as follows. In Section 2, we present the main results that
extend upon the theorems of \citet{Wasserman:2020aa}. Proofs are
then provided in Section 3. Technical requirements to prove our results
are provided in the Appendix.

\section{Main results}

Let $2^{\left[d\right]}$ be the power set of $\left[d\right]=\left\{ 1,\dots,d\right\} $,
and let $\mathbb{S}_{d}=2^{\left[d\right]}\backslash\left\{ \emptyset\right\} $.
For each $S\in\mathbb{S}_{d}$, let $S=\left\{ s_{1},\dots,s_{\left|S\right|}\right\} \subseteq\left[d\right]$,
where $\left|S\right|$ is the size of $S$. Further, let $\mathbb{T}_{d}$
be the set of all divisions of $\left[d\right]$ into two non-empty
subsets. For elements $T\in\mathbb{T}_{d}$, we write $\overleftarrow{T}=\left\{ \overleftarrow{t}_{1},\dots,\overleftarrow{t}_{\left|\overleftarrow{T}\right|}\right\} \subset\left[d\right]$
and $\overrightarrow{T}=\left\{ \overrightarrow{t}_{1},\dots,\overrightarrow{t}_{\left|\overrightarrow{T}\right|}\right\} \subset\left[d\right]\backslash\overleftarrow{T}$
to be the ``left-hand'' and ``right-hand'' subsets of the division
$T$, respectively. We note that $\left|\mathbb{S}_{d}\right|=2^{d}-1$
and $\left|\mathbb{T}_{d}\right|=3^{d}-2^{d+1}+1$.

For each $S$, let $\alpha_{S}\ge0$ and for each $T$, let $\beta_{T}\ge0$.
We shall call these coefficients weights. Put the weights $\alpha_{S}$
and $\beta_{T}$ in the vectors $\bm{\alpha}=\left(\alpha_{S}\right)_{S\in\mathbb{S}_{d}}$
and $\bm{\beta}=\left(\beta_{T}\right)_{T\in\mathbb{T}_{d}}$, respectively,
and assume that 
\begin{equation}
\gamma=\sum_{S\in\mathbb{S}_{d}}\alpha_{S}+\sum_{T\in\mathbb{T}_{d}}\beta_{T}>0\text{.}\label{eq: gamma}
\end{equation}

Given the set of weights $\bm{\alpha}$ and $\bm{\beta}$, we define
the individual composite likelihood (CL) for $\bm{X}$ as 
\[
p_{\bm{\alpha},\bm{\beta}}\left(\bm{X};\bm{\theta}\right)=\prod_{S\in\mathbb{S}_{d}}\left[p\left(\bm{X}_{S};\bm{\theta}\right)\right]^{\alpha_{S}/\gamma}\prod_{T\in\mathbb{T}_{d}}\left[p\left(\bm{X}_{\overleftarrow{T}}|\bm{X}_{\overrightarrow{T}};\bm{\theta}\right)\right]^{\beta_{T}/\gamma}\text{,}
\]
where $\bm{X}_{S}=\left(X_{s_{1}},\dots X_{s_{\left|S\right|}}\right)$,
$\bm{X}_{\overleftarrow{T}}=\left(X_{\overleftarrow{t}_{1}},\dots,X_{\overleftarrow{t}_{\left|\overleftarrow{T}\right|}}\right)$,
and $\bm{X}_{\overrightarrow{T}}=\left(X_{\overrightarrow{t}_{1}},\dots,X_{\overrightarrow{t}_{\left|\overrightarrow{T}\right|}}\right)$.
That is, $p\left(\bm{x}_{S};\bm{\theta}\right)$ is the marginal PDF
with respect to the coordinates of $\bm{X}$ corresponding to the
subset $S$, and $p\left(\bm{x}_{\overleftarrow{T}}|\bm{x}_{\overrightarrow{T}};\bm{\theta}\right)$
is the conditional PDF of the coordinates corresponding to $\overleftarrow{T}$,
conditioned on the coordinates corresponding to $\overrightarrow{T}$.

Assume, as in the introduction, that the elements of $\mathbf{X}_{2n}$
are sampled IID from a DGP with distribution $\mathcal{P}_{\bm{\theta}^{*}}$
and PDF $p\left(\bm{x};\bm{\theta}^{*}\right)$, for some $\bm{\theta}^{*}\in\Theta$.
Further, $\tilde{\bm{\theta}}_{n}^{k}$ are still generic estimators
of $\bm{\theta}^{*}$, for each $k\in\left\{ 0,1\right\} $. 

Let
\[
L_{\bm{\alpha},\bm{\beta}}\left(\bm{\theta};\mathbf{X}_{n}^{k}\right)=\prod_{i=1}^{n}p_{\bm{\alpha},\bm{\beta}}\left(\bm{X}_{i}^{k};\bm{\theta}\right)
\]
denote the composite likelihood of the subsample $\mathbf{X}_{n}^{k}$,
evaluated at $\bm{\theta}\in\Theta$. We shall write the split composite
likelihood ratio statistics (CLRSs) and the swapped CLRS as
\[
U_{\bm{\alpha},\bm{\beta},n}^{k}\left(\bm{\theta}\right)=\frac{L_{\bm{\alpha},\bm{\beta}}\left(\tilde{\bm{\theta}}_{n}^{1-k};\mathbf{X}_{n}^{k}\right)}{L_{\bm{\alpha},\bm{\beta}}\left(\bm{\theta};\mathbf{X}_{n}^{k}\right)}\text{,}
\]
for each $k\in\left\{ 0,1\right\} $, and 
\[
\bar{U}_{\bm{\alpha},\bm{\beta},n}\left(\bm{\theta}\right)=\frac{U_{\bm{\alpha},\bm{\beta},n}^{0}\left(\bm{\theta}\right)+U_{\bm{\alpha},\bm{\beta},n}^{1}\left(\bm{\theta}\right)}{2}\text{,}
\]
respectively. 

Let 
\[
C_{n}^{\alpha}=\left\{ \bm{\theta}\in\Theta:U_{\bm{\alpha},\bm{\beta},n}^{0}\left(\bm{\theta}\right)\le1/\alpha\right\} 
\]
and 
\[
\bar{C}_{n}^{\alpha}=\left\{ \bm{\theta}\in\Theta:\bar{U}_{\bm{\alpha},\bm{\beta},n}\left(\bm{\theta}\right)\le1/\alpha\right\} 
\]
be universal confidence set estimators. We are now ready to establish
our first result regarding finite-sample validity of $C_{n}^{\alpha}$
and $\bar{C}_{n}^{\alpha}$.
\begin{prop}
\label{prop CLRS}The confidence set estimators $C_{n}^{\alpha}$
and $\bar{C}_{n}^{\alpha}$ are finite sample valid $100\left(1-\alpha\right)\%$
confidence sets for $\bm{\theta}^{*}$. That is,
\[
\mathrm{Pr}_{\bm{\theta}^{*}}\left(\bm{\theta}^{*}\in C_{n}^{\alpha}\right)\ge1-\alpha\text{,}
\]
and 
\[
\mathrm{Pr}_{\bm{\theta}^{*}}\left(\bm{\theta}^{*}\in\bar{C}_{n}^{\alpha}\right)\ge1-\alpha\text{,}
\]
for every $n\in\mathbb{N}$.
\end{prop}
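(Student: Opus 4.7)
The plan is to mirror the argument of \citet{Wasserman:2020aa} in two steps: first establish the composite-likelihood analogue of inequality~(\ref{eq: Wasserman Main}), namely $\text{E}_{\bm{\theta}^{*}}[U_{\bm{\alpha},\bm{\beta},n}^{k}(\bm{\theta}^{*})] \le 1$ for each $k \in \{0,1\}$, and then deduce the coverage guarantees for $C_{n}^{\alpha}$ and $\bar{C}_{n}^{\alpha}$ by Markov's inequality.

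To establish the key inequality, I would condition on the held-out subsample $\mathbf{X}_{n}^{1-k}$, so that $\tilde{\bm{\theta}}_{n}^{1-k}$ becomes a deterministic value $\bm{\theta}' \in \Theta$ that is independent of $\mathbf{X}_{n}^{k}$. By the IID assumption and the multiplicative structure of $L_{\bm{\alpha},\bm{\beta}}$ across observations, it then suffices to prove the single-observation bound
\[
\text{E}_{\bm{\theta}^{*}}\left[\frac{p_{\bm{\alpha},\bm{\beta}}(\bm{X}; \bm{\theta}')}{p_{\bm{\alpha},\bm{\beta}}(\bm{X}; \bm{\theta}^{*})}\right] \le 1,
\]
after which independence of the $\bm{X}_{i}^{k}$ promotes it to the $n$-fold product.

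The central step is a weighted arithmetic--geometric mean inequality. Since the exponents $\alpha_{S}/\gamma$ and $\beta_{T}/\gamma$ are non-negative and, by the definition~(\ref{eq: gamma}) of $\gamma$, sum to $1$, the composite ratio is bounded above by the convex combination
\[
\sum_{S \in \mathbb{S}_{d}} \frac{\alpha_{S}}{\gamma} \frac{p(\bm{X}_{S}; \bm{\theta}')}{p(\bm{X}_{S}; \bm{\theta}^{*})} + \sum_{T \in \mathbb{T}_{d}} \frac{\beta_{T}}{\gamma} \frac{p(\bm{X}_{\overleftarrow{T}} | \bm{X}_{\overrightarrow{T}}; \bm{\theta}')}{p(\bm{X}_{\overleftarrow{T}} | \bm{X}_{\overrightarrow{T}}; \bm{\theta}^{*})}.
\]
Each marginal ratio has $\bm{\theta}^{*}$-expectation equal to $1$ by direct integration, and each conditional ratio has $\bm{\theta}^{*}$-expectation equal to $1$ via the tower property, taking the inner conditional expectation given $\bm{X}_{\overrightarrow{T}}$ first. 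Summing against the weights yields the bound~$1$.

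The confidence-set claims then follow from Markov's inequality applied to the non-negative statistic $U_{\bm{\alpha},\bm{\beta},n}^{0}(\bm{\theta}^{*})$, giving $\mathrm{Pr}_{\bm{\theta}^{*}}(U_{\bm{\alpha},\bm{\beta},n}^{0}(\bm{\theta}^{*}) > 1/\alpha) \le \alpha$ and hence the result for $C_{n}^{\alpha}$; and since $\bar{U}_{\bm{\alpha},\bm{\beta},n}(\bm{\theta}^{*})$ is an average of two non-negative variables each with $\bm{\theta}^{*}$-expectation at most $1$, the same argument delivers the bound for $\bar{C}_{n}^{\alpha}$. The main technical obstacle is the AM--GM step: unlike in the proper likelihood setting of \citet{Wasserman:2020aa}, $p_{\bm{\alpha},\bm{\beta}}$ is \emph{not} itself a probability density, so one cannot directly argue that its ratio integrates to $1$. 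The weighted AM--GM inequality is the device that bypasses this issue, reducing the problem to a convex combination of genuine (marginal and conditional) likelihood ratios, each of which does integrate to~$1$ under $\mathcal{P}_{\bm{\theta}^{*}}$.
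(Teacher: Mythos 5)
Your proposal is correct, and while the overall architecture (bound $\mathrm{E}_{\bm{\theta}^{*}}[U_{\bm{\alpha},\bm{\beta},n}^{k}(\bm{\theta}^{*})]\le1$ by conditioning on the held-out half, then apply Markov's inequality and linearity for $C_{n}^{\alpha}$ and $\bar{C}_{n}^{\alpha}$) matches the paper, your proof of the key expectation bound, Lemma \ref{lem main}, runs through a genuinely different device. The paper keeps the $n$-fold conditional integral intact, factorizes the true density $p(\bm{x}_{i};\bm{\theta}^{*})$ into conditional-times-marginal pieces so as to cancel the denominator $p_{\bm{\alpha},\bm{\beta}}(\bm{x}_{i};\bm{\theta}^{*})$, assembles the hybrid PDFs (\ref{eq: pdf construct 1})--(\ref{eq: pdf construct 2}), and then invokes the generalized H\"{o}lder inequality on the resulting geometric mean of genuine densities. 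You instead apply the weighted AM--GM inequality pointwise to the single-observation composite ratio, dominating it by a convex combination of bona fide marginal and conditional likelihood ratios, each with $\bm{\theta}^{*}$-expectation exactly $1$ (the conditional ones via the tower property given $\bm{X}_{\overrightarrow{T}}$), and promote the per-observation bound to the product using the conditional IID structure, which is valid since the ratios are non-negative and, given $\mathbf{X}_{n}^{1-k}$, independent, so the expectation of the product factorizes. The two arguments are close relatives: your AM--GM-plus-linearity step is essentially the standard proof of generalized H\"{o}lder specialized to factors that each integrate to one. What your route buys is economy and transparency: it dispenses with the hybrid-density constructions entirely, makes plain that the normalization by $\gamma$ in (\ref{eq: gamma}) is precisely what forces the exponents into a convex combination, and exhibits the CLRS as dominated by a mixture of ordinary likelihood-ratio statistics each satisfying the analogue of (\ref{eq: Wasserman Main}). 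What the paper's route buys is a form that parallels the smoothed- and misspecified-likelihood extensions of \citet{Wasserman:2020aa}, with explicit valid densities appearing in the integrand. Your concluding Markov step, handling $\bar{C}_{n}^{\alpha}$ by averaging the two bounded expectations, coincides with the paper's.
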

Consider the null and alternative hypotheses

\begin{equation}
\text{H}_{0}:\bm{\theta}\in\Theta_{0}\text{, and }\text{H}_{1}:\bm{\theta}\in\Theta\backslash\Theta_{0}\text{.}\label{eq: hypotheses}
\end{equation}
Due to the duality between confidence sets and hypothesis tests (cf.
Thm. 2.3 of \citealp[Appendix 1]{Hochberg:1987aa}), Proposition \ref{prop CLRS}
can be used to construct simple hypothesis tests using the rejection
rules: reject $\text{H}_{0}$ if $C_{n}^{\alpha}\cap\Theta_{0}=\emptyset$
or if $\bar{C}_{n}^{\alpha}\cap\Theta_{0}=\emptyset$. Both of these
tests control the Type I error at the correct level of significance
$\alpha$. However, these tests may be difficult to use when the shapes
of $\Theta_{0}$, $C_{n}^{\alpha}$, and $\bar{C}_{n}^{\alpha}$ are
complex and difficult to compute.

Let
\begin{equation}
\hat{\bm{\theta}}_{n}^{k}=\underset{\bm{\theta}\in\Theta_{0}}{\arg\max}\,L_{\bm{\alpha},\bm{\beta}}\left(\bm{\theta};\mathbf{X}_{n}^{k}\right)\label{eq: MCLE}
\end{equation}
denote the maximum CL estimator (MCLE) computed using the subset $\mathbf{X}_{n}^{k}$,
for each $k\in\left\{ 0,1\right\} $. Using the MCLEs, we can construct
tests that are more akin to the traditional likelihood ratio test
or the pseudo-likelihood ratio test of \citet{Molenberghs2005}. To
construct our tests, we require the split test statistics
\[
V_{\bm{\alpha},\bm{\beta},n}^{k}=\frac{L_{\bm{\alpha},\bm{\beta}}\left(\tilde{\bm{\theta}}_{n}^{1-k};\mathbf{X}_{n}^{k}\right)}{L_{\bm{\alpha},\bm{\beta}}\left(\hat{\bm{\theta}}_{n}^{k};\mathbf{X}_{n}^{k}\right)}\text{,}
\]
for each $k$, and the swapped test statistic
\[
\bar{V}_{\bm{\alpha},\bm{\beta},n}=\frac{V_{\bm{\alpha},\bm{\beta},n}^{0}+V_{\bm{\alpha},\bm{\beta},n}^{1}}{2}\text{.}
\]
We define the split composite likelihood ratio test (CLRT) and the
swapped CLRT via the rules: reject $\text{H}_{0}$ if $V_{\bm{\alpha},\bm{\beta},n}^{0}>1/\alpha$
or if $\bar{V}_{\bm{\alpha},\bm{\beta},n}>1/\alpha$, respectively.
The following result establishes the correctness of the split and
swapped CLRTs.
\begin{prop}
\label{prop CLRT}The split and the swapped CLRTs control the Type
I error at the level $\alpha$, for all $n\in\mathbb{N}$. That is,
\[
\sup_{\bm{\theta}^{*}\in\Theta_{0}}\mathrm{Pr}_{\bm{\theta}^{*}}\left(V_{\bm{\alpha},\bm{\beta},n}^{0}>1/\alpha\right)\le\alpha\text{,}
\]
and 
\[
\sup_{\bm{\theta}^{*}\in\Theta_{0}}\mathrm{Pr}_{\bm{\theta}^{*}}\left(\bar{V}_{\bm{\alpha},\bm{\beta},n}>1/\alpha\right)\le\alpha\text{.}
\]
\end{prop}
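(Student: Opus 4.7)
The plan is to reduce Proposition~\ref{prop CLRT} to the very same ingredients that deliver Proposition~\ref{prop CLRS}, using one simple observation about the MCLE. Specifically, I would first note that for any $\bm{\theta}^{*}\in\Theta_{0}$, the definition of $\hat{\bm{\theta}}_{n}^{k}$ in~(\ref{eq: MCLE}) as the maximizer of $L_{\bm{\alpha},\bm{\beta}}(\cdot;\mathbf{X}_{n}^{k})$ over $\Theta_{0}$ gives $L_{\bm{\alpha},\bm{\beta}}(\hat{\bm{\theta}}_{n}^{k};\mathbf{X}_{n}^{k})\ge L_{\bm{\alpha},\bm{\beta}}(\bm{\theta}^{*};\mathbf{X}_{n}^{k})$. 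Dividing numerator by a smaller denominator and comparing with the definitions of $V_{\bm{\alpha},\bm{\beta},n}^{k}$ and $U_{\bm{\alpha},\bm{\beta},n}^{k}$ yields the pointwise (sample-path) domination
\[
V_{\bm{\alpha},\bm{\beta},n}^{k}\le U_{\bm{\alpha},\bm{\beta},n}^{k}\bigl(\bm{\theta}^{*}\bigr)\quad\text{for }k\in\{0,1\},
\]
and averaging over $k$ gives $\bar{V}_{\bm{\alpha},\bm{\beta},n}\le\bar{U}_{\bm{\alpha},\bm{\beta},n}(\bm{\theta}^{*})$.

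Next I would invoke the composite-likelihood analogue of the Wasserman bound~(\ref{eq: Wasserman Main}), namely $\mathrm{E}_{\bm{\theta}^{*}}[U_{\bm{\alpha},\bm{\beta},n}^{k}(\bm{\theta}^{*})]\le 1$, which is precisely the ingredient already used to prove Proposition~\ref{prop CLRS} (and which I would expect to be packaged as a lemma in the Appendix, together with the linearity-of-expectation consequence $\mathrm{E}_{\bm{\theta}^{*}}[\bar{U}_{\bm{\alpha},\bm{\beta},n}(\bm{\theta}^{*})]\le 1$). Combining the pointwise domination with Markov's inequality then yields, for any $\bm{\theta}^{*}\in\Theta_{0}$,
\[
\mathrm{Pr}_{\bm{\theta}^{*}}\bigl(V_{\bm{\alpha},\bm{\beta},n}^{0}>1/\alpha\bigr)\le\mathrm{Pr}_{\bm{\theta}^{*}}\bigl(U_{\bm{\alpha},\bm{\beta},n}^{0}(\bm{\theta}^{*})>1/\alpha\bigr)\le\alpha\,\mathrm{E}_{\bm{\theta}^{*}}\bigl[U_{\bm{\alpha},\bm{\beta},n}^{0}(\bm{\theta}^{*})\bigr]\le\alpha,
\]
and the identical chain, with $\bar{U}$ and $\bar{V}$ in place of $U$ and $V$, for the swapped statistic. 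Since the final bound $\alpha$ does not depend on $\bm{\theta}^{*}$, taking the supremum over $\Theta_{0}$ completes both assertions.

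The only nontrivial conceptual move is the pointwise domination $V\le U$, and that follows from nothing more than feasibility of $\bm{\theta}^{*}$ for the MCLE optimization; everything else is Markov plus a lemma that must already be in hand for Proposition~\ref{prop CLRS}. I therefore do not anticipate any genuine obstacle: the argument is a clean transcription of Wasserman's hypothesis-testing derivation to the composite-likelihood setting, with the pseudo-density $p_{\bm{\alpha},\bm{\beta}}$ playing the role of $p$. The only place where one might want to be careful is in making sure the Appendix lemma establishing $\mathrm{E}_{\bm{\theta}^{*}}[U_{\bm{\alpha},\bm{\beta},n}^{k}(\bm{\theta}^{*})]\le 1$ does not implicitly assume $p_{\bm{\alpha},\bm{\beta}}$ integrates to one (which it generally does not); the correct proof routes through the factor-wise application of (\ref{eq: Wasserman Main}) to each marginal/conditional $p(\bm{x}_{S};\bm{\theta})$ and $p(\bm{x}_{\overleftarrow{T}}\mid\bm{x}_{\overrightarrow{T}};\bm{\theta})$ separately, exploiting independence across $i$ and the split between $\mathbf{X}_{n}^{0}$ and $\mathbf{X}_{n}^{1}$.
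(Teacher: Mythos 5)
Your proposal is correct and follows essentially the same route as the paper: the paper's proof of Proposition~\ref{prop CLRT} is exactly Markov's inequality combined with the MCLE domination $L_{\bm{\alpha},\bm{\beta}}(\hat{\bm{\theta}}_{n}^{k};\mathbf{X}_{n}^{k})\ge L_{\bm{\alpha},\bm{\beta}}(\bm{\theta}^{*};\mathbf{X}_{n}^{k})$ (hence $V_{\bm{\alpha},\bm{\beta},n}^{k}\le U_{\bm{\alpha},\bm{\beta},n}^{k}(\bm{\theta}^{*})$) and Lemma~\ref{lem main}, with the only cosmetic difference being that you apply the domination pointwise before Markov while the paper applies Markov first and then bounds the expectation. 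Your closing caution is also well placed: the paper's Lemma~\ref{lem main} indeed avoids assuming $p_{\bm{\alpha},\bm{\beta}}$ integrates to one, handling this via a decomposition into genuine PDFs and the generalized H\"{o}lder inequality rather than a literal factor-wise application of the univariate bound, but since your proof treats the lemma as given, this does not affect correctness.
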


\subsection{Always-valid inference}

Instead of observing $\mathbf{X}_{n}=\mathbf{X}_{n}^{0}$ in a single
batch, we now consider that the IID elements of $\mathbf{X}_{n}$
(i.e., $\bm{X}_{1},\bm{X}_{2},\dots$) arrive sequentially, from distribution
$\mathcal{P}_{\bm{\theta}^{*}}$. For each $n\in\mathbb{N}$, we wish
to conduct a test of the hypotheses \ref{eq: hypotheses}.

Let $\tilde{\bm{\theta}}_{n-1}^{1}$ be a generic non-anticipating
estimator of $\bm{\theta}^{*}$ (i.e., $\tilde{\bm{\theta}}_{n-1}^{1}$
is only dependent on the data in $\mathbf{X}_{n-1}$), and let $\hat{\bm{\theta}}_{n}^{0}$
be the same as it was defined in (\ref{eq: MCLE}). Further, define
the running CLRT test statistic
\[
M_{\bm{\alpha},\bm{\beta},n}=\frac{\prod_{i=1}^{n}p_{\bm{\alpha},\bm{\beta}}\left(\bm{X}_{i};\tilde{\bm{\theta}}_{i-1}^{1}\right)}{\prod_{i=1}^{n}p_{\bm{\alpha},\bm{\beta}}\left(\bm{X}_{i};\hat{\bm{\theta}}_{n}^{0}\right)}
\]
and at any time $n$, reject $\text{H}_{0}$ and stop the sequence
of tests if $M_{\bm{\alpha},\bm{\beta},n}>1/\alpha$. If $\nu_{\bm{\theta}}$
denotes the time at which the test stops, under the rejection rule,
given that the data arrises IID from $\mathcal{P}_{\bm{\theta}}$,
then we establish the fact that $\nu_{\bm{\theta}^{*}}$ is finite
with probability at most $\alpha$.
\begin{prop}
\label{prop stopping time}The running CLRT has Type I error at most
$\alpha$. That is
\[
\sup_{\bm{\theta}^{*}\in\Theta_{0}}\mathrm{Pr}_{\bm{\theta}^{*}}\left(\nu_{\bm{\theta}^{*}}<\infty\right)\le\alpha\text{.}
\]
\end{prop}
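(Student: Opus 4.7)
The plan is to reduce the statement to an application of Ville's maximal inequality for a nonnegative supermartingale, following the template used by \citet{Wasserman:2020aa} for the ordinary likelihood case but with the composite-likelihood domination inequality that is needed for Propositions \ref{prop CLRS} and \ref{prop CLRT}.

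First, for $\bm{\theta}^{*}\in\Theta_{0}$, I would introduce the auxiliary running statistic
\[
R_{n}=\frac{\prod_{i=1}^{n}p_{\bm{\alpha},\bm{\beta}}\left(\bm{X}_{i};\tilde{\bm{\theta}}_{i-1}^{1}\right)}{\prod_{i=1}^{n}p_{\bm{\alpha},\bm{\beta}}\left(\bm{X}_{i};\bm{\theta}^{*}\right)},
\]
with $R_{0}=1$, in which the denominator uses the true parameter instead of the data-dependent MCLE $\hat{\bm{\theta}}_{n}^{0}$. Because $\hat{\bm{\theta}}_{n}^{0}$ maximises $L_{\bm{\alpha},\bm{\beta}}(\cdot;\mathbf{X}_{n})$ over $\Theta_{0}$ and $\bm{\theta}^{*}\in\Theta_{0}$, the denominator of $M_{\bm{\alpha},\bm{\beta},n}$ dominates the denominator of $R_{n}$ while the numerators coincide; hence $M_{\bm{\alpha},\bm{\beta},n}\le R_{n}$ for every $n$, and so $\{\nu_{\bm{\theta}^{*}}<\infty\}=\{\sup_{n}M_{\bm{\alpha},\bm{\beta},n}>1/\alpha\}\subseteq\{\sup_{n}R_{n}>1/\alpha\}$.

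Next, I would show that $R_{n}$ is a nonnegative supermartingale with respect to $\mathcal{F}_{n}=\sigma(\bm{X}_{1},\dots,\bm{X}_{n})$ under $\mathrm{Pr}_{\bm{\theta}^{*}}$. Since $\tilde{\bm{\theta}}_{n-1}^{1}$ is non-anticipating, hence $\mathcal{F}_{n-1}$-measurable, and the $\bm{X}_{i}$ are IID,
\[
\mathrm{E}_{\bm{\theta}^{*}}\!\left[R_{n}\mid\mathcal{F}_{n-1}\right]=R_{n-1}\cdot\mathrm{E}_{\bm{\theta}^{*}}\!\left[\left.\frac{p_{\bm{\alpha},\bm{\beta}}\left(\bm{X}_{n};\tilde{\bm{\theta}}_{n-1}^{1}\right)}{p_{\bm{\alpha},\bm{\beta}}\left(\bm{X}_{n};\bm{\theta}^{*}\right)}\,\right|\,\mathcal{F}_{n-1}\right],
\]
and the supermartingale property reduces to the pointwise bound $\mathrm{E}_{\bm{\theta}^{*}}[p_{\bm{\alpha},\bm{\beta}}(\bm{X};\bm{\theta})/p_{\bm{\alpha},\bm{\beta}}(\bm{X};\bm{\theta}^{*})]\le1$ for every fixed $\bm{\theta}\in\Theta$. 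This is the composite-likelihood analogue of (\ref{eq: Wasserman Main}) that underlies Proposition \ref{prop CLRS}; it follows from the fact that the CL is a weighted geometric mean of marginal and conditional densities with weights $\alpha_{S}/\gamma$ and $\beta_{T}/\gamma$ summing to one, combined with the weighted arithmetic-geometric mean inequality and the elementary identity $\mathrm{E}_{\bm{\theta}^{*}}[p(\bm{X}_{S};\bm{\theta})/p(\bm{X}_{S};\bm{\theta}^{*})]=\mathrm{E}_{\bm{\theta}^{*}}[p(\bm{X}_{\overleftarrow{T}}|\bm{X}_{\overrightarrow{T}};\bm{\theta})/p(\bm{X}_{\overleftarrow{T}}|\bm{X}_{\overrightarrow{T}};\bm{\theta}^{*})]=1$ for each component.

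Finally, with $R_{n}$ a nonnegative supermartingale and $\mathrm{E}_{\bm{\theta}^{*}}R_{0}=1$, Ville's inequality yields $\mathrm{Pr}_{\bm{\theta}^{*}}(\sup_{n}R_{n}>1/\alpha)\le\alpha$, and combining with the domination $M_{\bm{\alpha},\bm{\beta},n}\le R_{n}$ gives the claimed bound uniformly for $\bm{\theta}^{*}\in\Theta_{0}$. The only substantive step is the one-step composite-likelihood bound in the second paragraph; once it is in hand (and it is essentially the same inequality required for the batch results), the martingale/Ville argument is routine.
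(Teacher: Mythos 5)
Your proposal is correct and takes essentially the same route as the paper: dominate $M_{\bm{\alpha},\bm{\beta},n}$ by the oracle ratio $M_{n}^{*}=R_{\bm{\alpha},\bm{\beta},n}\left(\bm{\theta}^{*}\right)$ using the MCLE property over $\Theta_{0}$, show that $\left(M_{n}^{*}\right)$ is a nonnegative supermartingale for the natural filtration via the non-anticipating estimator and a one-step composite-likelihood bound, and conclude with Ville's inequality (Lemma \ref{lem Ville}). Your only deviation is in proving the one-step bound $\text{E}_{\bm{\theta}^{*}}\left[p_{\bm{\alpha},\bm{\beta}}\left(\bm{X};\bm{\theta}\right)/p_{\bm{\alpha},\bm{\beta}}\left(\bm{X};\bm{\theta}^{*}\right)\right]\le1$ via the weighted AM--GM inequality combined with the unit expectation of each marginal and conditional ratio (the latter by the tower property), where the paper instead reuses the hybrid-density factorization and generalized H\"older argument of Lemma \ref{lem main}; the two are interchangeable here, since every component ratio integrates to exactly one.
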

Let $P_{n}=1/M_{\bm{\alpha},\bm{\beta},n}$ and $\tilde{P}_{n}=\min_{s\le n}\left(1/M_{\bm{\alpha},\bm{\beta},n}\right)$
be $p$-values for the test of (\ref{eq: hypotheses}) and let $N\in\mathbb{N}$
be a random variable. The following result establishes that both $P_{N}$
and $\tilde{P}_{N}$ are valid.
\begin{prop}
\label{prop p-value sequence}For any random $N$, not necessarily
a stopping time, $P_{N}$ and $\tilde{P}_{N}$ are valid $p$-values.
That is
\[
\sup_{\bm{\theta}^{*}\in\Theta_{0}}\mathrm{Pr}_{\bm{\theta}^{*}}\left(P_{N}\le\alpha\right)\le\alpha\text{,}
\]
and 
\[
\sup_{\bm{\theta}^{*}\in\Theta_{0}}\mathrm{Pr}_{\bm{\theta}^{*}}\left(\tilde{P}_{N}\le\alpha\right)\le\alpha\text{,}
\]
for all $\alpha\in\left[0,1\right]$.
\end{prop}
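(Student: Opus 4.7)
The plan is to reduce both claims to the classical Ville maximal inequality applied to an auxiliary nonnegative supermartingale that dominates $M_{\bm{\alpha},\bm{\beta},n}$. Fix $\bm{\theta}^{*}\in\Theta_{0}$, let $\mathcal{F}_{n}=\sigma(\bm{X}_{1},\dots,\bm{X}_{n})$, and define the oracle process
\[
\tilde{M}_{n}=\prod_{i=1}^{n}\frac{p_{\bm{\alpha},\bm{\beta}}(\bm{X}_{i};\tilde{\bm{\theta}}_{i-1}^{1})}{p_{\bm{\alpha},\bm{\beta}}(\bm{X}_{i};\bm{\theta}^{*})},\qquad \tilde{M}_{0}=1.
\]
Because $\tilde{\bm{\theta}}_{i-1}^{1}$ is $\mathcal{F}_{i-1}$-measurable and $\bm{X}_{i}$ is independent of $\mathcal{F}_{i-1}$, I would first establish the one-step bound $\text{E}_{\bm{\theta}^{*}}[p_{\bm{\alpha},\bm{\beta}}(\bm{X};\bm{\theta})/p_{\bm{\alpha},\bm{\beta}}(\bm{X};\bm{\theta}^{*})]\le 1$ for every fixed $\bm{\theta}\in\Theta$ by writing the ratio as a weighted geometric mean (with weights $\alpha_{S}/\gamma$ and $\beta_{T}/\gamma$ summing to one) of marginal- and conditional-likelihood ratios, each of which has expectation exactly $1$ under $\mathcal{P}_{\bm{\theta}^{*}}$, and then invoking the weighted AM--GM inequality. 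Conditioning on $\mathcal{F}_{n-1}$ and applying this bound at $\bm{\theta}=\tilde{\bm{\theta}}_{n-1}^{1}$ shows that $(\tilde{M}_{n})$ is a nonnegative supermartingale.

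Next, since $\bm{\theta}^{*}\in\Theta_{0}$ and $\hat{\bm{\theta}}_{n}^{0}$ maximizes $L_{\bm{\alpha},\bm{\beta}}(\cdot;\mathbf{X}_{n})$ over $\Theta_{0}$, we have $L_{\bm{\alpha},\bm{\beta}}(\hat{\bm{\theta}}_{n}^{0};\mathbf{X}_{n})\ge L_{\bm{\alpha},\bm{\beta}}(\bm{\theta}^{*};\mathbf{X}_{n})$, so $M_{\bm{\alpha},\bm{\beta},n}\le\tilde{M}_{n}$ pathwise. Ville's maximal inequality for nonnegative supermartingales then gives
\[
\mathrm{Pr}_{\bm{\theta}^{*}}\bigl(\sup_{n\ge1}\tilde{M}_{n}\ge 1/\alpha\bigr)\le\alpha\,\text{E}[\tilde{M}_{0}]=\alpha.
\]

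For any random $N$ (not necessarily a stopping time),
\[
\{P_{N}\le\alpha\}=\{M_{\bm{\alpha},\bm{\beta},N}\ge 1/\alpha\}\subseteq\{\tilde{M}_{N}\ge 1/\alpha\}\subseteq\bigl\{\sup_{n}\tilde{M}_{n}\ge 1/\alpha\bigr\},
\]
and similarly $\{\tilde{P}_{N}\le\alpha\}=\{\max_{s\le N}M_{\bm{\alpha},\bm{\beta},s}\ge 1/\alpha\}\subseteq\{\sup_{n}\tilde{M}_{n}\ge 1/\alpha\}$. Both probabilities are thus bounded by $\alpha$, and taking the supremum over $\bm{\theta}^{*}\in\Theta_{0}$ concludes both claims.

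The main obstacle is the supermartingale step: composite-likelihood ratios do not in general have expectation one, so Ville cannot be invoked directly on $M_{\bm{\alpha},\bm{\beta},n}$ itself. The weighted AM--GM (Jensen) trick applied factor-by-factor is what makes the oracle process $(\tilde{M}_{n})$ a genuine supermartingale; once that is in hand, the rest is set-inclusion bookkeeping. The key gain over Proposition \ref{prop stopping time} is that Ville's inequality controls the supremum over all $n$ simultaneously, so $N$ need not be a stopping time.
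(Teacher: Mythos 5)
Your proof is correct, and its core mechanism is the same as the paper's: dominate $M_{\bm{\alpha},\bm{\beta},n}$ pathwise by the oracle process at $\bm{\theta}^{*}$ (your $\tilde{M}_{n}$ is exactly the paper's $M_{n}^{*}=R_{\bm{\alpha},\bm{\beta},n}(\bm{\theta}^{*})$), verify it is a nonnegative supermartingale, and apply Ville's inequality, with the MCLE-maximization bound $L_{\bm{\alpha},\bm{\beta}}(\hat{\bm{\theta}}_{n}^{0};\mathbf{X}_{n})\ge L_{\bm{\alpha},\bm{\beta}}(\bm{\theta}^{*};\mathbf{X}_{n})$ handling the null constraint. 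You deviate in two minor but genuine ways. First, for the passage to a random, non-stopping-time $N$, the paper reduces $\{\exists n:P_{n}\le\alpha\}$ (and likewise $\bigcup_{n}\{\tilde{P}_{n}\le\alpha\}=\bigcup_{n}\{P_{n}\le\alpha\}$) to a union event and cites Lemma \ref{lem equivalence of events} (Howard et al.), whereas you prove the needed direction directly via the pathwise inclusion $\{P_{N}\le\alpha\}\subseteq\{\sup_{n}\tilde{M}_{n}\ge1/\alpha\}$; this is self-contained and makes transparent why $N$ need not be a stopping time, at the cost of not exposing the equivalence with stopping-time validity that the cited lemma provides. Second, for the one-step supermartingale bound, the paper (via Lemma \ref{lem main}) factors the true density against the composite ratio to build proper PDFs and applies the generalized H\"{o}lder inequality, while you use pointwise weighted AM--GM (weights $\alpha_{S}/\gamma$, $\beta_{T}/\gamma$ summing to one) followed by linearity of expectation, exploiting the fact that each marginal and conditional likelihood ratio has expectation exactly $1$ under $\mathcal{P}_{\bm{\theta}^{*}}$ (the conditional factors via the tower property over $\bm{X}_{\overrightarrow{T}}$). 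Both routes are valid here; AM--GM is more elementary, while H\"{o}lder would also cover situations where the individual factors merely have expectation at most one. One housekeeping point: the paper's Proposition \ref{prop p-value sequence} proof leans on the bound (\ref{eq: M martingale}) already established in Proposition \ref{prop stopping time}, so your inline rederivation of the supermartingale property duplicates that work but introduces no error.
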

We define a confidence sequence for $\bm{\theta}^{*}$ as an infinite
sequence of confidence sets that are all simultaneously valid. In
the current context, such confidence sequence are $\left(D_{n}^{\alpha}\right)_{n\in\mathbb{N}}$
and $\left(\tilde{D}_{n}^{\alpha}\right)_{n\in\mathbb{N}}$, where
\[
D_{n}^{\alpha}=\left\{ \bm{\theta}\in\Theta:R_{\bm{\alpha},\bm{\beta},n}\left(\bm{\theta}\right)\le1/\alpha\right\} \text{,}
\]
$\tilde{D}_{n}^{\alpha}=\bigcap_{m\le n}D_{m}^{\alpha}$, and
\begin{equation}
R_{\bm{\alpha},\bm{\beta},n}\left(\bm{\theta}\right)=\frac{\prod_{i=1}^{n}p_{\bm{\alpha},\bm{\beta}}\left(\bm{X}_{i};\tilde{\bm{\theta}}_{i-1}^{1}\right)}{\prod_{i=1}^{n}p_{\bm{\alpha},\bm{\beta}}\left(\bm{X}_{i};\bm{\theta}\right)}\text{.}\label{eq: R Ratio}
\end{equation}
The following result establishes the validity of $\left(D_{n}^{\alpha}\right)_{n\in\mathbb{N}}$
and $\left(\tilde{D}_{n}^{\alpha}\right)_{n\in\mathbb{N}}$.
\begin{prop}
\label{prop confidence sequence}The confidence sequences $\left(D_{n}^{\alpha}\right)_{n\in\mathbb{N}}$
and $\left(\tilde{D}_{n}^{\alpha}\right)_{n\in\mathbb{N}}$ are valid.
That is
\[
\mathrm{Pr}_{\bm{\theta}^{*}}\left(\forall n\in\mathbb{N}:\bm{\theta}^{*}\in D_{n}^{\alpha}\right)\ge1-\alpha
\]
and 
\[
\mathrm{Pr}_{\bm{\theta}^{*}}\left(\forall n\in\mathbb{N}:\bm{\theta}^{*}\in\tilde{D}_{n}^{\alpha}\right)\ge1-\alpha\text{.}
\]
\end{prop}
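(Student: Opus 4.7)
The first observation is that the two conclusions are equivalent. Indeed, since $\tilde{D}_n^\alpha=\bigcap_{m\le n}D_m^\alpha$, the event $\{\forall n\in\mathbb{N}:\bm{\theta}^*\in \tilde{D}_n^\alpha\}$ is literally identical to $\{\forall n\in\mathbb{N}:\bm{\theta}^*\in D_n^\alpha\}$. Thus it suffices to establish the first inequality, which in turn amounts to showing that
\[
\mathrm{Pr}_{\bm{\theta}^*}\!\left(\sup_{n\in\mathbb{N}} R_{\bm{\alpha},\bm{\beta},n}(\bm{\theta}^*)>1/\alpha\right)\le \alpha.
\]

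The plan is the standard Ville-style argument. Define $R_n:=R_{\bm{\alpha},\bm{\beta},n}(\bm{\theta}^*)$ (with $R_0:=1$) and the natural filtration $\mathcal{F}_n:=\sigma(\bm{X}_1,\ldots,\bm{X}_n)$. The aim is to show that $(R_n)_{n\ge 0}$ is a nonnegative $(\mathcal{F}_n)$-supermartingale starting at $1$, and then to invoke Ville's maximal inequality, which yields $\mathrm{Pr}_{\bm{\theta}^*}(\sup_n R_n>1/\alpha)\le \mathrm{E}_{\bm{\theta}^*}[R_0]/(1/\alpha)=\alpha$.

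To verify the supermartingale property, I would write $R_n=R_{n-1}\cdot W_n$ where
\[
W_n = \frac{p_{\bm{\alpha},\bm{\beta}}(\bm{X}_n;\tilde{\bm{\theta}}_{n-1}^1)}{p_{\bm{\alpha},\bm{\beta}}(\bm{X}_n;\bm{\theta}^*)}.
\]
By assumption, $\tilde{\bm{\theta}}_{n-1}^1$ is non-anticipating, hence $\mathcal{F}_{n-1}$-measurable, while $\bm{X}_n$ is independent of $\mathcal{F}_{n-1}$ and distributed according to $\mathcal{P}_{\bm{\theta}^*}$. Conditioning and freezing $\tilde{\bm{\theta}}_{n-1}^1$ gives
\[
\mathrm{E}_{\bm{\theta}^*}[R_n\mid \mathcal{F}_{n-1}]=R_{n-1}\cdot \mathrm{E}_{\bm{\theta}^*}\!\left[\frac{p_{\bm{\alpha},\bm{\beta}}(\bm{X};\bm{\theta}')}{p_{\bm{\alpha},\bm{\beta}}(\bm{X};\bm{\theta}^*)}\right]_{\bm{\theta}'=\tilde{\bm{\theta}}_{n-1}^1}\!\!\!\le R_{n-1},
\]
where the last inequality is the composite-likelihood analogue of the elementary bound $\mathrm{E}_{\bm{\theta}^*}[p(\bm{X};\bm{\theta}')/p(\bm{X};\bm{\theta}^*)]\le 1$ for $\bm{\theta}'$ independent of $\bm{X}$; this is precisely the key technical ingredient already used to derive $\mathrm{E}_{\bm{\theta}^*}[U_{\bm{\alpha},\bm{\beta},n}^k(\bm{\theta}^*)]\le 1$ in Proposition~\ref{prop CLRS}, and is stated as a technical prerequisite in the Appendix.

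The main (and only real) obstacle is this last inequality on the expected per-observation CL ratio. For a genuine likelihood it follows by direct integration; for the CL, however, the product of marginal and conditional PDFs $p_{\bm{\alpha},\bm{\beta}}$ is generally not a density, so the bound is not a tautological normalisation but requires a Jensen/weighted-AM--GM style argument across the exponents $\alpha_S/\gamma$ and $\beta_T/\gamma$. Once this is in hand, the supermartingale property is immediate, Ville's inequality delivers the tail bound on $\sup_n R_n$, and the proposition follows.
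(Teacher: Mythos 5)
Your proposal is correct and follows essentially the same route as the paper: you identify $R_{\bm{\alpha},\bm{\beta},n}\left(\bm{\theta}^{*}\right)$ as a nonnegative supermartingale started at $1$ (this is exactly the paper's $M_{n}^{*}$, whose supermartingale property and Ville bound (\ref{eq: M martingale}) were already established in the proof of Proposition \ref{prop stopping time} and are simply reused here), and you dispose of $\left(\tilde{D}_{n}^{\alpha}\right)_{n\in\mathbb{N}}$ by the same observation the paper makes, namely that the events $\left\{ \forall n:\bm{\theta}^{*}\in\tilde{D}_{n}^{\alpha}\right\}$ and $\left\{ \forall n:\bm{\theta}^{*}\in D_{n}^{\alpha}\right\}$ coincide. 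One small correction: the per-observation bound $\mathrm{E}_{\bm{\theta}^{*}}\left[p_{\bm{\alpha},\bm{\beta}}\left(\bm{X};\bm{\theta}'\right)/p_{\bm{\alpha},\bm{\beta}}\left(\bm{X};\bm{\theta}^{*}\right)\right]\le1$ you flag as the only real obstacle is not among the Appendix prerequisites (those are only Lemmas \ref{lem Ville} and \ref{lem equivalence of events}); it is precisely the argument of Lemma \ref{lem main}, whose key step is not bare weighted AM--GM but the recombination of each marginal or conditional factor at $\bm{\theta}'$ with complementary conditionals at $\bm{\theta}^{*}$ into the genuine PDFs (\ref{eq: pdf construct 1})--(\ref{eq: pdf construct 2}), followed by generalized H\"{o}lder.
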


\section{Proofs}

The following result provides the primary mechanism under which Propositions
\ref{prop CLRS} and \ref{prop CLRT} can be established, and is a
direct analog to (\ref{eq: Wasserman Main}) for CLs. 
\begin{lem}
\label{lem main}If\textbf{ }$\mathbf{X}_{2n}$ is an IID sample from
a DGP with distribution $\mathcal{P}_{\bm{\theta}^{*}}$ and PDF $f\left(\bm{x};\bm{\theta}^{*}\right)$,
then $U_{\bm{\alpha},\bm{\beta},n}^{k}\left(\bm{\theta}^{*}\right)$
has bounded expectation $\text{E}_{\bm{\theta}^{*}}\left[U_{\bm{\alpha},\bm{\beta},n}^{k}\left(\bm{\theta}^{*}\right)\right]\le1$,
for each $k\in\left\{ 0,1\right\} $ and for all $n\in\mathbb{N}$.
\end{lem}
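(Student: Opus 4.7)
The plan is to reduce the claim to the single-observation bound $\text{E}_{\bm{\theta}^{*}}[p_{\bm{\alpha},\bm{\beta}}(\bm{X};\bm{\theta})/p_{\bm{\alpha},\bm{\beta}}(\bm{X};\bm{\theta}^{*})]\le 1$ for every \emph{fixed} $\bm{\theta}\in\Theta$, and then to bootstrap via independence. The bootstrap step is the standard data-splitting argument: conditioning on $\mathbf{X}_{n}^{1-k}$ freezes $\tilde{\bm{\theta}}_{n}^{1-k}$ to a deterministic element of $\Theta$, and because $\mathbf{X}_{n}^{k}$ is independent of $\mathbf{X}_{n}^{1-k}$ and IID under $\mathcal{P}_{\bm{\theta}^{*}}$, the conditional expectation of $U_{\bm{\alpha},\bm{\beta},n}^{k}(\bm{\theta}^{*})$ given $\mathbf{X}_{n}^{1-k}$ factors into a product of $n$ single-observation ratios evaluated at the frozen $\tilde{\bm{\theta}}_{n}^{1-k}$, each bounded by $1$; an outer expectation in $\mathbf{X}_{n}^{1-k}$ then yields the claim.

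The crux is therefore the single-observation bound, and the argument of \citet{Wasserman:2020aa} does not transfer verbatim: $p_{\bm{\alpha},\bm{\beta}}$ is in general not a valid PDF, so the ratio against $p(\bm{x};\bm{\theta}^{*})$ is not an integral of a probability density against itself. To circumvent this, I would exploit the fact that, by the definition of $\gamma$ in (\ref{eq: gamma}), the exponents $\alpha_{S}/\gamma$ and $\beta_{T}/\gamma$ are nonnegative and sum to $1$. Thus
\[
\frac{p_{\bm{\alpha},\bm{\beta}}(\bm{X};\bm{\theta})}{p_{\bm{\alpha},\bm{\beta}}(\bm{X};\bm{\theta}^{*})}=\prod_{S\in\mathbb{S}_{d}}\left[\frac{p(\bm{X}_{S};\bm{\theta})}{p(\bm{X}_{S};\bm{\theta}^{*})}\right]^{\alpha_{S}/\gamma}\prod_{T\in\mathbb{T}_{d}}\left[\frac{p(\bm{X}_{\overleftarrow{T}}|\bm{X}_{\overrightarrow{T}};\bm{\theta})}{p(\bm{X}_{\overleftarrow{T}}|\bm{X}_{\overrightarrow{T}};\bm{\theta}^{*})}\right]^{\beta_{T}/\gamma}
\]
is a weighted geometric mean of marginal and conditional density ratios, which by Jensen's inequality applied to the convex exponential (equivalently, the weighted AM--GM inequality) is dominated pointwise by the corresponding weighted arithmetic mean of those same ratios.

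It then suffices to check that each summand in the arithmetic mean has $\text{E}_{\bm{\theta}^{*}}$-expectation exactly $1$: for the marginal terms this is immediate since $p(\bm{x}_{S};\bm{\theta})$ integrates to $1$ in $\bm{x}_{S}$; for the conditional terms it follows from the tower property, conditioning on $\bm{X}_{\overrightarrow{T}}$ so that the inner integral $\int p(\bm{x}_{\overleftarrow{T}}|\bm{X}_{\overrightarrow{T}};\bm{\theta})\,\mathrm{d}\bm{x}_{\overleftarrow{T}}=1$ almost surely. Summing these unit expectations against weights that total $1$ closes out the single-observation bound.

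The main obstacle is precisely the lack of a normalization property for $p_{\bm{\alpha},\bm{\beta}}$, which is what forces the Jensen detour; without it, one has only $\text{E}_{\bm{\theta}^{*}}[\log(p_{\bm{\alpha},\bm{\beta}}(\bm{X};\bm{\theta})/p_{\bm{\alpha},\bm{\beta}}(\bm{X};\bm{\theta}^{*}))]\le 0$ from the usual Kullback--Leibler argument, which runs in the wrong direction for our purposes. A subsidiary care point is verifying that Fubini and the conditional integrals are legitimate for the marginals and conditionals under a common dominating measure; this is where I expect the regularity conditions alluded to in the Appendix to come in.
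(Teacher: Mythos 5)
Your proof is correct, and it reaches Lemma \ref{lem main} by a genuinely different route from the paper's. The outer scaffolding coincides: condition on $\mathbf{X}_{n}^{1-k}$ to freeze $\tilde{\bm{\theta}}_{n}^{1-k}$, factor the conditional expectation over the $n$ independent observations, and finish with the law of iterated expectations. The divergence is in the single-observation bound. The paper writes $p\left(\bm{x}_{i};\bm{\theta}^{*}\right)=\prod_{S\in\mathbb{S}_{d}}\left[p\left(\bm{x}_{i};\bm{\theta}^{*}\right)\right]^{\alpha_{S}/\gamma}\prod_{T\in\mathbb{T}_{d}}\left[p\left(\bm{x}_{i};\bm{\theta}^{*}\right)\right]^{\beta_{T}/\gamma}$, absorbs each factor into the corresponding density ratio to form the genuine hybrid PDFs (\ref{eq: pdf construct 1}) and (\ref{eq: pdf construct 2}), and then applies the generalized H\"{o}lder inequality, so the bound is a product of unit integrals raised to the weights. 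You instead dominate the weighted geometric mean of marginal and conditional density ratios pointwise by the weighted arithmetic mean (AM--GM), and check that each summand has unit $\mathcal{P}_{\bm{\theta}^{*}}$-expectation by marginalization and the tower property. The two arguments are close cousins---your arithmetic-mean summands, multiplied by $p\left(\bm{x};\bm{\theta}^{*}\right)$, are exactly the paper's hybrid densities $\tilde{p}$ and $\check{p}$, and H\"{o}lder with exponents summing to one is essentially an integrated AM--GM---but your organization buys something concrete: it avoids the hybrid-PDF bookkeeping, exhibits the CL ratio as dominated by a convex combination of ordinary likelihood ratios each with expectation one under $\mathcal{P}_{\bm{\theta}^{*}}$, and isolates precisely the one-observation conditional statement $\text{E}_{\bm{\theta}^{*}}\left[p_{\bm{\alpha},\bm{\beta}}\left(\bm{X}_{n};\tilde{\bm{\theta}}_{n-1}^{1}\right)/p_{\bm{\alpha},\bm{\beta}}\left(\bm{X}_{n};\bm{\theta}^{*}\right)|\mathcal{F}_{n-1}\right]\le1$ that the proof of Proposition \ref{prop stopping time} later invokes only by reference to Lemma \ref{lem main}; your version makes that step self-contained. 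What the paper's route buys in exchange is the pointwise sharper geometric-mean bound and structural continuity with the smoothed-likelihood arguments of \citet{Wasserman:2020aa}. Two small corrections: your ``expectation exactly $1$'' should read ``at most $1$'' when the support of $p\left(\cdot;\bm{\theta}\right)$ is not contained in that of $p\left(\cdot;\bm{\theta}^{*}\right)$ (the inequality still closes, exactly as in the paper); and your anticipated reliance on Appendix regularity for Fubini is unnecessary---all integrands are nonnegative, so Tonelli applies unconditionally, and the paper's Appendix lemmas (Ville's inequality and the event-equivalence lemma) serve only the sequential results.
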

\begin{proof}
We shall prove the $k=0$ case. Let $\mathbf{x}_{n}=\left(\bm{x}_{1},\dots,\bm{x}_{n}\right)$
and write

\begin{align*}
 & \text{E}_{\bm{\theta}^{*}}\left[U_{\bm{\alpha},\bm{\beta},n}^{0}\left(\bm{\theta}^{*}\right)|\mathbf{X}_{n}^{1}\right]\\
= & \int_{\mathbb{X}^{n}}\frac{L_{\bm{\alpha},\bm{\beta}}\left(\tilde{\bm{\theta}}_{n}^{1};\mathbf{x}_{n}\right)}{L_{\bm{\alpha},\bm{\beta}}\left(\bm{\theta}^{*};\mathbf{x}_{n}\right)}L\left(\bm{\theta}^{*};\mathbf{x}_{n}\right)\text{d}\mathbf{x}_{n}\\
= & \int_{\mathbb{X}^{n}}\frac{\prod_{i=1}^{n}p_{\bm{\alpha},\bm{\beta}}\left(\bm{x}_{i};\tilde{\bm{\theta}}_{n}^{1}\right)}{\prod_{i=1}^{n}p_{\bm{\alpha},\bm{\beta}}\left(\bm{x}_{i};\bm{\theta}^{*}\right)}\prod_{i=1}^{n}p\left(\bm{x}_{i};\bm{\theta}^{*}\right)\text{d}\mathbf{x}_{n}\\
= & \int_{\mathbb{X}^{n}}\prod_{i=1}^{n}\frac{p_{\bm{\alpha},\bm{\beta}}\left(\bm{x}_{i};\tilde{\bm{\theta}}_{n}^{1}\right)}{p_{\bm{\alpha},\bm{\beta}}\left(\bm{x}_{i};\bm{\theta}^{*}\right)}p\left(\bm{x}_{i};\bm{\theta}^{*}\right)\text{d}\mathbf{x}_{n}\text{.}
\end{align*}
Then, simplify the integrand by making the factorization
\begin{align*}
 & \frac{p_{\bm{\alpha},\bm{\beta}}\left(\bm{x}_{i};\tilde{\bm{\theta}}_{n}^{1}\right)}{p_{\bm{\alpha},\bm{\beta}}\left(\bm{x}_{i};\bm{\theta}^{*}\right)}p\left(\bm{x}_{i};\bm{\theta}^{*}\right)\\
\overset{\text{(i)}}{=} & \left(\frac{\prod_{S\in\mathbb{S}_{d}}\left[p\left(\bm{x}_{iS};\tilde{\bm{\theta}}_{n}^{1}\right)\right]^{\alpha_{S}/\gamma}\prod_{T\in\mathbb{T}_{d}}\left[p\left(\bm{x}_{i\overleftarrow{T}}|\bm{x}_{i\overrightarrow{T}};\tilde{\bm{\theta}}_{n}^{1}\right)\right]^{\beta_{T}/\gamma}}{\prod_{S\in\mathbb{S}_{d}}\left[p\left(\bm{x}_{iS};\bm{\theta}^{*}\right)\right]^{\alpha_{S}/\gamma}\prod_{T\in\mathbb{T}_{d}}\left[p\left(\bm{x}_{i\overleftarrow{T}}|\bm{x}_{i\overrightarrow{T}};\bm{\theta}^{*}\right)\right]^{\beta_{T}/\gamma}}\right)\\
 & \times\prod_{S\in\mathbb{S}_{d}}\left[p\left(\bm{x}_{i};\bm{\theta}^{*}\right)\right]^{\alpha_{S}/\gamma}\prod_{T\in\mathbb{T}_{d}}\left[p\left(\bm{x}_{i};\bm{\theta}^{*}\right)\right]^{\beta_{T}/\gamma}\\
\overset{\text{(ii)}}{=} & \prod_{S\in\mathbb{S}_{d}}\left[p\left(\bm{x}_{iS};\tilde{\bm{\theta}}_{n}^{1}\right)\right]^{\alpha_{S}/\gamma}\prod_{T\in\mathbb{T}_{d}}\left[p\left(\bm{x}_{i\overleftarrow{T}}|\bm{x}_{i\overrightarrow{T}};\tilde{\bm{\theta}}_{n}^{1}\right)\right]^{\beta_{T}/\gamma}\\
 & \times\prod_{S\in\mathbb{S}_{d}}\left[p\left(\bm{x}_{i,\left[d\right]\backslash S}|\bm{x}_{iS};\bm{\theta}^{*}\right)\right]^{\alpha_{S}/\gamma}\prod_{T\in\mathbb{T}_{d}}\left[p\left(\bm{x}_{i,\left[d\right]\backslash\left(\overleftarrow{T}\cup\overrightarrow{T}\right)}|\bm{x}_{i\overleftarrow{T}},\bm{x}_{i\overrightarrow{T}};\bm{\theta}^{*}\right)\right]^{\beta_{T}/\gamma}\\
 & \times\prod_{T\in\mathbb{T}_{d}}\left[p\left(\bm{x}_{i\overrightarrow{T}};\bm{\theta}^{*}\right)\right]^{\beta_{T}/\gamma}\\
\overset{\text{(iii)}}{=} & \prod_{S\in\mathbb{S}_{d}}\left[\tilde{p}\left(\bm{x}_{i};\tilde{\bm{\theta}}_{n}^{1},\bm{\theta}^{*}\right)\right]^{\alpha_{S}/\gamma}\prod_{T\in\mathbb{T}_{d}}\left[\check{p}\left(\bm{x}_{i};\tilde{\bm{\theta}}_{n}^{1},\bm{\theta}^{*}\right)\right]^{\beta_{T}/\gamma}
\end{align*}
where (i) is due to (\ref{eq: gamma}) and (ii) is due to the PDF
decompositions
\[
p\left(\bm{x}_{i};\bm{\theta}^{*}\right)=p\left(\bm{x}_{i,\left[d\right]\backslash S}|\bm{x}_{iS};\bm{\theta}^{*}\right)p\left(\bm{x}_{iS};\bm{\theta}^{*}\right)
\]
and 
\[
p\left(\bm{x}_{i};\bm{\theta}^{*}\right)=p\left(\bm{x}_{i,\left[d\right]\backslash\left(\overleftarrow{T}\cup\overrightarrow{T}\right)}|\bm{x}_{i\overleftarrow{T}},\bm{x}_{i\overrightarrow{T}};\bm{\theta}^{*}\right)p\left(\bm{x}_{i\overleftarrow{T}}|\bm{x}_{i\overrightarrow{T}};\bm{\theta}^{*}\right)p\left(\bm{x}_{i\overrightarrow{T}};\bm{\theta}^{*}\right)\text{.}
\]
The PDFs on line (iii) are then constructed as
\begin{equation}
\tilde{p}\left(\bm{x}_{i};\tilde{\bm{\theta}}_{n}^{1},\bm{\theta}^{*}\right)=p\left(\bm{x}_{i,\left[d\right]\backslash S}|\bm{x}_{iS};\bm{\theta}^{*}\right)p\left(\bm{x}_{iS};\tilde{\bm{\theta}}_{n}^{1}\right)\label{eq: pdf construct 1}
\end{equation}
and
\begin{equation}
\check{p}\left(\bm{x}_{i};\tilde{\bm{\theta}}_{n}^{1},\bm{\theta}^{*}\right)=p\left(\bm{x}_{i,\left[d\right]\backslash\left(\overleftarrow{T}\cup\overrightarrow{T}\right)}|\bm{x}_{i\overleftarrow{T}},\bm{x}_{i\overrightarrow{T}};\bm{\theta}^{*}\right)p\left(\bm{x}_{i\overleftarrow{T}}|\bm{x}_{i\overrightarrow{T}};\tilde{\bm{\theta}}_{n}^{1}\right)p\left(\bm{x}_{i\overrightarrow{T}};\bm{\theta}^{*}\right)\text{.}\label{eq: pdf construct 2}
\end{equation}
We then have 
\begin{align*}
 & \text{E}_{\bm{\theta}^{*}}\left[U_{\bm{\alpha},\bm{\beta},n}^{0}\left(\bm{\theta}^{*}\right)|\mathbf{X}_{n}^{1}\right]\\
= & \int_{\mathbb{X}^{n}}\prod_{i=1}^{n}\prod_{S\in\mathbb{S}_{d}}\left[\tilde{p}\left(\bm{x}_{i};\tilde{\bm{\theta}}_{n}^{1},\bm{\theta}^{*}\right)\right]^{\alpha_{S}/\gamma}\prod_{T\in\mathbb{T}_{d}}\left[\check{p}\left(\bm{x}_{i};\tilde{\bm{\theta}}_{n}^{1},\bm{\theta}^{*}\right)\right]^{\beta_{T}/\gamma}\text{d}\mathbf{x}_{n}\\
\overset{\text{(i)}}{=} & \prod_{i=1}^{n}\int_{\mathbb{X}}\prod_{S\in\mathbb{S}_{d}}\left[\tilde{p}\left(\bm{x}_{i};\tilde{\bm{\theta}}_{n}^{1},\bm{\theta}^{*}\right)\right]^{\alpha_{S}/\gamma}\prod_{T\in\mathbb{T}_{d}}\left[\check{p}\left(\bm{x}_{i};\tilde{\bm{\theta}}_{n}^{1},\bm{\theta}^{*}\right)\right]^{\beta_{T}/\gamma}\text{d}\bm{x}_{i}\\
\overset{\text{(ii)}}{\le} & \prod_{i=1}^{n}\prod_{S\in\mathbb{S}_{d}}\left[\int_{\mathbb{X}}\tilde{p}\left(\bm{x}_{i};\tilde{\bm{\theta}}_{n}^{1},\bm{\theta}^{*}\right)\text{d}\bm{x}_{i}\right]^{\alpha_{S}/\gamma}\prod_{T\in\mathbb{T}_{d}}\left[\int_{\mathbb{X}}\check{p}\left(\bm{x}_{i};\tilde{\bm{\theta}}_{n}^{1},\bm{\theta}^{*}\right)\text{d}\bm{x}_{i}\right]^{\beta_{S}/\gamma}\\
\overset{\text{(iii)}}{=} & \prod_{i=1}^{n}\prod_{S\in\mathbb{S}_{d}}1^{\alpha_{S}/\gamma}\prod_{T\in\mathbb{T}_{d}}1^{\beta_{S}/\gamma}=1\text{,}
\end{align*}
where (i) is due to separability, (ii) is due to the generalized H\"{o}lder's
inequality, and (iii) is due to the fact that (\ref{eq: pdf construct 1})
and (\ref{eq: pdf construct 2}) are PDFs. Finally, via the law of
iterated expectations, we have

\[
\text{E}_{\bm{\theta}^{*}}\left[U_{\bm{\alpha},\bm{\beta},n}^{0}\left(\bm{\theta}^{*}\right)\right]=\text{E}_{\bm{\theta}^{*}}\text{E}_{\bm{\theta}^{*}}\left[U_{\bm{\alpha},\bm{\beta},n}^{0}\left(\bm{\theta}^{*}\right)|\mathbf{X}_{n}^{1}\right]\le1\text{.}
\]
\end{proof}

\subsection{Proof of Proposition \ref{prop CLRS}}

We shall prove the fact that $\mathrm{Pr}_{\bm{\theta}^{*}}\left(\bm{\theta}^{*}\in\bar{C}_{n}^{\alpha}\right)\ge1-\alpha$
and not that the case for $C_{n}^{\alpha}$ can be proved in an identical
manner.

For any $\bm{\theta}^{*}\in\Theta$ and $n$, we have
\begin{align*}
 & \mathrm{Pr}_{\bm{\theta}^{*}}\left(\bm{\theta}^{*}\notin\bar{C}_{n}^{\alpha}\right)\\
= & \mathrm{Pr}_{\bm{\theta}^{*}}\left(\frac{U_{\bm{\alpha},\bm{\beta},n}^{0}\left(\bm{\theta}^{*}\right)+U_{\bm{\alpha},\bm{\beta},n}^{1}\left(\bm{\theta}^{*}\right)}{2}>1/\alpha\right)\\
\overset{\text{(i)}}{\le} & \alpha\text{E}_{\bm{\theta}^{*}}\left[\frac{U_{\bm{\alpha},\bm{\beta},n}^{0}\left(\bm{\theta}^{*}\right)+U_{\bm{\alpha},\bm{\beta},n}^{1}\left(\bm{\theta}^{*}\right)}{2}\right]\\
= & \frac{\alpha}{2}\text{E}_{\bm{\theta}^{*}}\left[U_{\bm{\alpha},\bm{\beta},n}^{0}\left(\bm{\theta}^{*}\right)\right]+\frac{\alpha}{2}\text{E}_{\bm{\theta}^{*}}\left[U_{\bm{\alpha},\bm{\beta},n}^{1}\left(\bm{\theta}^{*}\right)\right]\\
\overset{\text{(ii)}}{\le} & \frac{\alpha}{2}+\frac{\alpha}{2}=\alpha\text{,}
\end{align*}
where (i) is due to Markov's inequality and (ii) is due to Lemma \ref{lem main}.
We obtain the desired result by computing the complement
\[
\mathrm{Pr}_{\bm{\theta}^{*}}\left(\bm{\theta}^{*}\in\bar{C}_{n}\right)=1-\mathrm{Pr}_{\bm{\theta}^{*}}\left(\bm{\theta}^{*}\notin\bar{C}_{n}\right)\ge1-\alpha\text{.}
\]

\subsection{Proof of Proposition \ref{prop CLRT}}

We shall prove the result for the swapped CRLT and note that the split
CLRT result can be proved in an identical manner.

For any $\bm{\theta}^{*}\in\Theta_{0}$ and $n$, we have
\begin{align*}
 & \mathrm{Pr}_{\bm{\theta}^{*}}\left(\bar{V}_{\bm{\alpha},\bm{\beta},n}>1/\alpha\right)\\
= & \mathrm{Pr}_{\bm{\theta}^{*}}\left(\frac{V_{\bm{\alpha},\bm{\beta},n}^{0}+V_{\bm{\alpha},\bm{\beta},n}^{1}}{2}>\frac{1}{\alpha}\right)\\
\overset{\text{(i)}}{\le} & \alpha\text{E}_{\bm{\theta}^{*}}\left[\frac{V_{\bm{\alpha},\bm{\beta},n}^{0}+V_{\bm{\alpha},\bm{\beta},n}^{1}}{2}\right]\\
\overset{\text{(ii)}}{\le} & \alpha\text{E}_{\bm{\theta}^{*}}\left[\frac{U_{\bm{\alpha},\bm{\beta},n}^{0}\left(\bm{\theta}^{*}\right)+U_{\bm{\alpha},\bm{\beta},n}^{1}\left(\bm{\theta}^{*}\right)}{2}\right]\\
= & \frac{\alpha}{2}\text{E}_{\bm{\theta}^{*}}\left[U_{\bm{\alpha},\bm{\beta},n}^{0}\left(\bm{\theta}^{*}\right)\right]+\frac{\alpha}{2}\text{E}_{\bm{\theta}^{*}}\left[U_{\bm{\alpha},\bm{\beta},n}^{1}\left(\bm{\theta}^{*}\right)\right]\\
\overset{\text{(iii)}}{\le} & \frac{\alpha}{2}+\frac{\alpha}{2}=\alpha\text{,}
\end{align*}
where (i) is due to Markov's inequality, (ii) is due to (\ref{eq: MCLE})
(i.e., $L_{\bm{\alpha},\bm{\beta}}\left(\hat{\bm{\theta}}_{n}^{k};\mathbf{X}_{n}^{k}\right)\ge L_{\bm{\alpha},\bm{\beta}}\left(\bm{\theta}^{*};\mathbf{X}_{n}^{k}\right)$,
for all $\bm{\theta}^{*}\in\Theta_{0}$), and (iii) is due to Lemma
\ref{lem main}. The desired result is thus obtained.

\subsection{Proof of Proposition \ref{prop stopping time}}

Under $\text{H}_{0}$, observe that $M_{\bm{\alpha},\bm{\beta},n}\le M_{n}^{*}$,
where $M_{n}^{*}=R_{\bm{\alpha},\bm{\beta},n}\left(\bm{\theta}^{*}\right)$
is as defined in (\ref{eq: R Ratio}), since $L_{\bm{\alpha},\bm{\beta}}\left(\hat{\bm{\theta}}_{n};\mathbf{X}_{n}\right)\ge L_{\bm{\alpha},\bm{\beta}}\left(\bm{\theta}^{*};\mathbf{X}_{n}\right)$,
for $\bm{\theta}^{*}\in\Theta_{0}$. Let $\left(\mathcal{F}_{n}\right)_{n\in\mathbb{N}\cup\left\{ 0\right\} }$
be the natural filtration, where $\mathcal{F}_{n}=\sigma\left(\mathbf{X}_{n}\right)$.
Upon defining $M_{0}^{*}=1$, notice that
\begin{align*}
\text{E}_{\bm{\theta}^{*}}\left[M_{n}^{*}|\mathcal{F}_{n-1}\right] & =\text{E}_{\bm{\theta}^{*}}\left[\frac{\prod_{i=1}^{n-1}p_{\bm{\alpha},\bm{\beta}}\left(\bm{X}_{i};\tilde{\bm{\theta}}_{i-1}^{1}\right)}{\prod_{i=1}^{n-1}p_{\bm{\alpha},\bm{\beta}}\left(\bm{X}_{i};\bm{\theta}^{*}\right)}\frac{p_{\bm{\alpha},\bm{\beta}}\left(\bm{X}_{n};\tilde{\bm{\theta}}_{n-1}^{1}\right)}{p_{\bm{\alpha},\bm{\beta}}\left(\bm{X}_{n};\bm{\theta}^{*}\right)}|\mathcal{F}_{n-1}\right]\\
 & =\text{E}_{\bm{\theta}^{*}}\left[M_{n-1}^{*}\frac{p_{\bm{\alpha},\bm{\beta}}\left(\bm{X}_{n};\tilde{\bm{\theta}}_{n-1}^{1}\right)}{p_{\bm{\alpha},\bm{\beta}}\left(\bm{X}_{n};\bm{\theta}^{*}\right)}|\mathcal{F}_{n-1}\right]\\
 & =M_{n-1}^{*}\text{E}_{\bm{\theta}^{*}}\left[\frac{p_{\bm{\alpha},\bm{\beta}}\left(\bm{X}_{n};\tilde{\bm{\theta}}_{n-1}^{1}\right)}{p_{\bm{\alpha},\bm{\beta}}\left(\bm{X}_{n};\bm{\theta}^{*}\right)}|\mathcal{F}_{n-1}\right]\\
 & \overset{\text{(i)}}{\le}M_{n-1}^{*}\text{,}
\end{align*}
where (i) is established using the same argument as used in Lemma
\ref{lem main}. Thus, we have established that $\left(M_{n}^{*}\right)_{n\in\mathbb{N}\cup\left\{ 0\right\} }$
is a supermartingale, adapted to $\left(\mathcal{F}_{n}\right)_{n\in\mathbb{N}\cup\left\{ 0\right\} }$.
Upon application of Lemma \ref{lem Ville}, we have
\begin{equation}
\mathrm{Pr}_{\bm{\theta}^{*}}\left(\exists n\in\mathbb{N}:M_{n}^{*}\ge1/\alpha\right)\le\alpha M_{0}^{*}=\alpha\text{.}\label{eq: M martingale}
\end{equation}

Note that 
\[
\left\{ \nu_{\bm{\theta}^{*}}=\infty\right\} =\left\{ \forall n\in\mathbb{N}:M_{\bm{\alpha},\bm{\beta},n}<1/\alpha\right\} 
\]
and hence
\[
\left\{ \nu_{\bm{\theta}^{*}}<\infty\right\} =\left\{ \exists n\in\mathbb{N}:M_{\bm{\alpha},\bm{\beta},n}\ge1/\alpha\right\} \text{.}
\]
We obtain the desired result, since
\begin{align*}
\mathrm{Pr}_{\bm{\theta}^{*}}\left(\nu_{\bm{\theta}^{*}}<\infty\right) & =\mathrm{Pr}_{\bm{\theta}^{*}}\left(\exists n\in\mathbb{N}:M_{\bm{\alpha},\bm{\beta},n}\ge1/\alpha\right)\\
 & \le\mathrm{Pr}_{\bm{\theta}^{*}}\left(\exists n\in\mathbb{N}:M_{n}^{*}\ge1/\alpha\right)\\
 & \le\alpha\text{.}
\end{align*}

\subsection{Proof of Proposition \ref{prop p-value sequence}}

Firstly note that
\begin{align*}
\left\{ \exists n\in\mathbb{N}:M_{\bm{\alpha},\bm{\beta},n}\ge1/\alpha\right\}  & =\left\{ \exists n\in\mathbb{N}:P_{n}\le\alpha\right\} \\
 & =\bigcup_{n\in\mathbb{N}}\left\{ P_{n}\le\alpha\right\} \text{,}
\end{align*}
and apply Lemma \ref{lem equivalence of events} to establish the
validity of $P_{N}$.

In order to establish the validity of $\tilde{P}_{N}$, we note that
\[
\left\{ \tilde{P}_{n}\le\alpha\right\} =\bigcup_{m\le n}\left\{ P_{m}\le\alpha\right\} 
\]
and hence
\begin{align*}
\bigcup_{n\in\mathbb{N}}\left\{ \tilde{P}_{n}\le\alpha\right\}  & =\bigcup_{n\in\mathbb{N}}\bigcup_{m\le n}\left\{ P_{m}\le\alpha\right\} \\
 & =\bigcup_{n\in\mathbb{N}}\left\{ P_{n}\le\alpha\right\} \text{.}
\end{align*}

\subsection{Proof of Proposition \ref{prop confidence sequence}}

Notice that $R_{\bm{\alpha},\bm{\beta},n}\left(\bm{\theta}^{*}\right)=M_{n}^{*}$,
for each $n\in\mathbb{N}$, where $M_{n}^{*}$ is as defined in the
proof of Proposition \ref{prop stopping time}. Then
\begin{align*}
\mathrm{Pr}_{\bm{\theta}^{*}}\left(\exists n\in\mathbb{N}:\bm{\theta}^{*}\notin D_{n}^{\alpha}\right) & =\mathrm{Pr}_{\bm{\theta}^{*}}\left(\exists n\in\mathbb{N}:R_{\bm{\alpha},\bm{\beta},n}\left(\bm{\theta}^{*}\right)>1/\alpha\right)\\
 & \le\mathrm{Pr}_{\bm{\theta}^{*}}\left(\exists n\in\mathbb{N}:M_{n}^{*}\ge1/\alpha\right)\\
 & \le\alpha\text{,}
\end{align*}
due to (\ref{eq: M martingale}). Thus, we have demonstrated the validity
of $\left(D_{n}^{\alpha}\right)_{n\in\mathbb{N}}$.

To prove the validity of $\left(\tilde{D}_{n}^{\alpha}\right)_{n\in\mathbb{N}}$,
write
\begin{align*}
\left\{ \bm{\theta}^{*}\notin\tilde{D}_{n}^{\alpha}\right\}  & =\left\{ \bm{\theta}^{*}\notin\bigcap_{m\le n}D_{n}^{\alpha}\right\} \\
 & =\bigcup_{m\le n}\left\{ \bm{\theta}^{*}\notin D_{n}^{\alpha}\right\} \text{.}
\end{align*}
Thus
\begin{align*}
\left\{ \exists n\in\mathbb{N}:\bm{\theta}^{*}\notin\tilde{D}_{n}^{\alpha}\right\}  & =\bigcup_{n\in\mathbb{N}}\left\{ \bm{\theta}^{*}\notin\tilde{D}_{n}^{\alpha}\right\} \\
 & =\bigcup_{n\in\mathbb{N}}\bigcup_{m\le n}\left\{ \bm{\theta}^{*}\notin D_{n}^{\alpha}\right\} \\
 & =\bigcup_{n\in\mathbb{N}}\left\{ \bm{\theta}^{*}\notin D_{n}^{\alpha}\right\} \\
 & =\left\{ \exists n\in\mathbb{N}:\bm{\theta}^{*}\notin D_{n}^{\alpha}\right\} \text{,}
\end{align*}
as required.

\section*{Appendix}

\subsection*{Technical requirements}

We state some technical results that are required throughout the text.
References for unproved results are provided at the end of the section.
\begin{lem}
[Ville's Inequality]\label{lem Ville}If $\left(Y_{n}\right)_{n\in\mathbb{N}\cup\left\{ 0\right\} }$
is a non-negative supermartingale, adapted to the filtration $\left(\mathcal{F}_{n}\right)_{n\in\mathbb{N}\cup\left\{ 0\right\} }$.
Then, for any $\alpha>0$, we have
\[
\mathrm{Pr}\left(\exists n\in\mathbb{N}:Y_{n}\ge1/\alpha\right)\le\alpha Y_{0}\text{.}
\]
\end{lem}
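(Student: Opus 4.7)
The plan is to prove Ville's inequality by the standard route of stopping the supermartingale at the first crossing time and applying optional stopping for bounded stopping times, then pushing the bound to infinity via monotone convergence. I would not attempt to apply optional stopping directly at an unbounded stopping time, since the supermartingale is only assumed non-negative (not uniformly integrable).

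First, I would introduce the first-crossing stopping time
\[
\tau=\inf\{n\in\mathbb{N}\cup\{0\}:Y_{n}\ge 1/\alpha\},
\]
with the convention $\inf\emptyset=\infty$, and note that $\tau$ is a stopping time with respect to $(\mathcal{F}_n)$ because $\{\tau\le n\}=\bigcup_{k\le n}\{Y_k\ge 1/\alpha\}\in\mathcal{F}_n$. Then, for any fixed $N\in\mathbb{N}$, the stopping time $\tau\wedge N$ is bounded, so by the optional stopping theorem applied to the non-negative supermartingale $(Y_n)$ we obtain $\mathrm{E}[Y_{\tau\wedge N}]\le Y_0$.

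Next, I would split the expectation according to whether $\tau\le N$. On the event $\{\tau\le N\}$ we have $Y_{\tau\wedge N}=Y_{\tau}\ge 1/\alpha$ by the definition of $\tau$, and on the complementary event $Y_{\tau\wedge N}=Y_N\ge 0$. Combining these with non-negativity gives
\[
Y_{0}\ge\mathrm{E}[Y_{\tau\wedge N}]\ge\mathrm{E}\!\left[Y_{\tau\wedge N}\mathbf{1}_{\{\tau\le N\}}\right]\ge\tfrac{1}{\alpha}\mathrm{Pr}(\tau\le N),
\]
so $\mathrm{Pr}(\tau\le N)\le\alpha Y_{0}$. Since the events $\{\tau\le N\}$ increase to $\{\tau<\infty\}$ as $N\uparrow\infty$, monotone convergence of probability yields $\mathrm{Pr}(\tau<\infty)\le\alpha Y_{0}$. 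Finally, rewriting the event $\{\tau<\infty\}=\{\exists n\in\mathbb{N}:Y_n\ge 1/\alpha\}$ (possibly absorbing the $n=0$ case into $Y_0\ge 1/\alpha$, in which case the bound is automatic) gives the desired conclusion.

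The only mildly subtle point is the justification of optional stopping at $\tau\wedge N$: one cannot apply it at the unbounded $\tau$ directly because $(Y_n)$ need not be uniformly integrable, but the truncation to a bounded stopping time sidesteps this entirely, and the limit $N\to\infty$ is free by monotone convergence on the nested events $\{\tau\le N\}$. So the main obstacle is merely keeping this truncation-then-limit structure clean; after that, the bound $\mathrm{E}[Y_{\tau\wedge N}]\ge (1/\alpha)\mathrm{Pr}(\tau\le N)$ is a one-line Markov-type comparison using $Y\ge 0$ on the complementary event.
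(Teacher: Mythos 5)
Your proof is correct in every step: $\tau$ is a legitimate stopping time, optional stopping applies at the bounded time $\tau\wedge N$ because supermartingales (which are integrable by definition) satisfy $\mathrm{E}\left[Y_{\tau\wedge N}\right]\le\mathrm{E}\left[Y_{0}\right]$ for bounded stopping times, the Markov-type split using $Y_{\tau}\ge1/\alpha$ on $\left\{ \tau\le N\right\} $ and non-negativity on the complement is valid, and the passage $N\rightarrow\infty$ by continuity of probability along the nested events $\left\{ \tau\le N\right\} $ is exactly the right way to avoid any uniform-integrability assumption. Note, however, that the paper itself offers no proof to compare against: Lemma \ref{lem Ville} is stated in the Appendix as an imported technical requirement, with the argument deferred by citation to Lemma 1 of Howard et al. (2020) and Lemma 1.1 of Stout (1973). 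Your truncation-then-limit argument is the standard proof found in those references, so you have in effect made the paper self-contained rather than diverged from it. Two minor points worth tightening. First, your inequality $\mathrm{E}\left[Y_{\tau\wedge N}\right]\le Y_{0}$ implicitly treats $Y_{0}$ as deterministic; for random $Y_{0}$ the clean conclusion is $\mathrm{Pr}\left(\exists n:Y_{n}\ge1/\alpha\right)\le\alpha\,\mathrm{E}\left[Y_{0}\right]$, and the deterministic reading is consistent with how the lemma is used in the paper, where $M_{0}^{*}=1$. Second, your $\tau$ ranges over $n\in\mathbb{N}\cup\left\{ 0\right\} $ while the lemma's event ranges over $n\in\mathbb{N}$; as you observe, including $n=0$ only enlarges the event $\left\{ \tau<\infty\right\} $, so the stated bound follows a fortiori.
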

\begin{lem}
\label{lem equivalence of events}Let $\left(\mathsf{A}_{n}\right)_{n\in\mathbb{N}}$
be a sequence of events in some filtered probability space, and let
$\mathsf{A}_{\infty}=\lim\sup_{n\rightarrow\infty}\mathsf{A}_{n}$.
If $\alpha\in\left[0,1\right]$, then the following statements are
equivalent: (a) $\mathrm{Pr}\left(\bigcup_{n=1}^{\infty}\mathsf{A}_{n}\right)\le\alpha$,
(b) $\mathrm{Pr}\left(\mathsf{A}_{N}\right)\le\alpha$ for all random
(potentially not stopping times) $N$, (c) $\mathrm{Pr}\left(\mathsf{A}_{\nu}\right)\le\alpha$
for all stopping times $\nu$ (possibly infinite).
\end{lem}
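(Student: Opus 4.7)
\textbf{Proof proposal for Lemma~\ref{lem equivalence of events}.}

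The plan is to establish the cycle of implications $\text{(a)} \Rightarrow \text{(b)} \Rightarrow \text{(c)} \Rightarrow \text{(a)}$, which collapses the three conditions to the single assertion $\mathrm{Pr}(\bigcup_{n} \mathsf{A}_n) \le \alpha$.

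For $\text{(a)} \Rightarrow \text{(b)}$, I would argue that for any random $N$ with values in $\mathbb{N}\cup\{\infty\}$, the realized event $\mathsf{A}_N = \{\omega : \omega \in \mathsf{A}_{N(\omega)}\}$ is contained in $\bigcup_{n=1}^{\infty} \mathsf{A}_n$. On $\{N < \infty\}$ this is immediate, and on $\{N = \infty\}$ it follows from the chain $\mathsf{A}_\infty = \limsup_n \mathsf{A}_n = \bigcap_m \bigcup_{n\ge m} \mathsf{A}_n \subseteq \bigcup_{n=1}^{\infty} \mathsf{A}_n$. Monotonicity of probability then yields $\mathrm{Pr}(\mathsf{A}_N) \le \mathrm{Pr}(\bigcup_n \mathsf{A}_n) \le \alpha$. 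The implication $\text{(b)} \Rightarrow \text{(c)}$ is immediate because every (possibly infinite) stopping time is a random variable, so $\text{(c)}$ is a specialization of $\text{(b)}$.

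For the closing implication $\text{(c)} \Rightarrow \text{(a)}$, I would construct a witness stopping time. Define the first-hitting time $\nu = \inf\{n \in \mathbb{N} : \omega \in \mathsf{A}_n\}$ with the convention $\inf \emptyset = \infty$. Under the standing assumption that $(\mathsf{A}_n)$ is adapted, so that $\mathsf{A}_n \in \mathcal{F}_n$, the identity $\{\nu \le n\} = \bigcup_{k \le n} \mathsf{A}_k \in \mathcal{F}_n$ confirms that $\nu$ is a valid stopping time. I then claim $\mathrm{Pr}(\mathsf{A}_\nu) = \mathrm{Pr}(\bigcup_n \mathsf{A}_n)$: on $\{\nu < \infty\}$ the event $\mathsf{A}_\nu$ occurs by definition of $\nu$, while on $\{\nu = \infty\}$ no $\mathsf{A}_n$ occurs so $\mathsf{A}_\infty = \limsup_n \mathsf{A}_n$ fails, contributing nothing. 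Applying $\text{(c)}$ to this specific $\nu$ then gives $\mathrm{Pr}(\bigcup_n \mathsf{A}_n) \le \alpha$, which is $\text{(a)}$.

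The main obstacle is the bookkeeping in the $\text{(c)} \Rightarrow \text{(a)}$ step: one must carefully marry the convention $\mathsf{A}_\infty = \limsup_n \mathsf{A}_n$ with the fact that $\{\nu = \infty\}$ is disjoint from $\mathsf{A}_\infty$, so that stopping at infinity does not inflate $\mathrm{Pr}(\mathsf{A}_\nu)$ beyond $\mathrm{Pr}(\bigcup_n \mathsf{A}_n)$. Once this is handled, the argument uses nothing deeper than monotonicity of probability and the measurability of first-hitting times for adapted events.
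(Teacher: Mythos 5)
Your proof is correct, but note that the paper does not prove this lemma at all: it is imported as a technical requirement, with the statement attributed to Lemma 3 of \citet{Howard:2020ab}, so there is no in-paper argument to compare against. Your cycle $\text{(a)}\Rightarrow\text{(b)}\Rightarrow\text{(c)}\Rightarrow\text{(a)}$ is in fact the standard proof given in that cited reference: the inclusion $\mathsf{A}_N\subseteq\bigcup_{n}\mathsf{A}_n$ (with the $\{N=\infty\}$ case handled through $\limsup_n\mathsf{A}_n\subseteq\bigcup_n\mathsf{A}_n$), the trivial specialization from random times to stopping times, and the first-hitting time $\nu=\inf\{n:\omega\in\mathsf{A}_n\}$ as the witness, for which $\mathsf{A}_\nu$ coincides exactly with $\bigcup_n\mathsf{A}_n$ because $\{\nu=\infty\}$ is disjoint from $\mathsf{A}_\infty$. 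Two small points in your favor: you correctly made explicit the adaptedness assumption $\mathsf{A}_n\in\mathcal{F}_n$, which the paper's phrasing (``events in some filtered probability space'') leaves implicit but which is genuinely needed for $\nu$ to be a stopping time in the $\text{(c)}\Rightarrow\text{(a)}$ step; and your observation that $\mathrm{Pr}(\mathsf{A}_\nu)$ equals, rather than merely bounds, $\mathrm{Pr}\left(\bigcup_n\mathsf{A}_n\right)$ is what makes the closing implication tight. The only cosmetic gap is that measurability of $\mathsf{A}_N$ deserves a one-line justification via the decomposition $\mathsf{A}_N=\bigcup_{n\in\mathbb{N}\cup\{\infty\}}\left(\{N=n\}\cap\mathsf{A}_n\right)$, but this is routine since $N$ is assumed to be a random variable.
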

Lemma \ref{lem Ville} appears as Lemma 1 in \citet{Howard:2020aa}
(see also \citealp*[Lem. 1.1]{Stout:1973aa}). Lemma \ref{lem equivalence of events}
appears as Lemma 3 in \citet{Howard:2020ab}.

\bibliographystyle{apalike2}
\bibliography{20200901_MASTERBIB}

\end{document}